\title{Non-separability of the Lipschitz distance}
\author{Kohei Suzuki \vspace{2mm} \\ {\it \small Department of Mathematics, Faculty of Science} \\ {\it \small Kyoto University} 
   \\ {\it \small Kyoto, 606-8502, Japan}
   \vspace{3mm} \\
            Yohei Yamazaki \vspace{2mm} \\ {\it \small Department of Mathematics, Faculty of Science} \\ {\it \small Kyoto University} 
   \\ {\it \small Kyoto, 606-8502, Japan}} 
\newcommand{\subjclass}[2][2010]{%
  \let\@oldtitle\@title%
  \gdef\@title{\@oldtitle\footnotetext{#1 \emph{Mathematics subject classification.} #2}}%
}
\newcommand{\keywords}[1]{%
  \let\@@oldtitle\@title%
  \gdef\@title{\@@oldtitle\footnotetext{\emph{Key words and phrases.} #1.}}%
}
\subjclass{Primary 53C23; Secondary 53C20.}
\keywords{Lipschitz convergence, compact metric spaces, separability}
\newtheorem{theorem}{Theorem}
\newtheorem{lemma}[theorem]{Lemma}
\newtheorem{proposition}[theorem]{Proposition}
\theoremstyle{definition}
\newtheorem{definition}[theorem]{Definition}
\newtheorem{remark}[theorem]{Remark}
\renewcommand{\eqref}[1]{(\ref{#1})}
\renewcommand{\bigskip}{\vspace{0.3cm}}
\renewcommand{\l}{\left}
\renewcommand{\r}{\right}
\newcommand{\e}{\epsilon}
\newcommand{\R}{{\mathbb R}}
\newcommand{\Z}{{\mathbb Z}}
\newcommand{\ZP}{\Z_{>0}}
\date{}
\begin{document}
\maketitle
\begin{abstract}
Let $X$ be a compact metric space and $\mathcal M_X$ be the set of isometry classes of compact metric spaces $Y$ such that the Lipschitz distance $d_L(X,Y)$ is finite.
We show that $(\mathcal M_X, d_L)$ is not separable when $X$ is a closed interval, or an infinite union of shrinking closed intervals.   
\end{abstract}

\section{Introduction}
For compact metric spaces $(X,d_X)$ and $(Y,d_Y)$, {\it the Lipschitz distance} $d_{L}(X,Y)$ is defined to be the infimum of $\e\ge 0$ such that an $\e$-isometry $f: X \to Y$ exists.
Here a bi-Lipschitz homeomorphism $f:X\to Y$ is called {\it an $\e$-isometry} if 
\begin{align*}
	|\log {\rm dil}(f)| + |\log {\rm dil}(f^{-1})| \le \e,
\end{align*}
where ${\rm dil}(f)$ denotes the smallest Lipschitz constant of $f$, called {\it the dilation of $f$}:
$${\rm dil}(f)=\sup_{\substack{x,y\in X \\ x\neq y}}\frac{d_Y(f(x), f(y))}{d_X(x,y)}.$$

Let $\mathcal M$ be the set of isometry classes of compact metric spaces. It is well-known that $(\mathcal M, d_L)$ is a complete metric space. See, e.g., \cite[Appendix A]{S} for the proof of the completeness and see, e.g., \cite{BBI01, Gro99} for details of the Lipschitz distance.

Then the following question arises:
\begin{description}
\item[(Q)]:  Is the metric space $(\mathcal M, d_L)$ separable?
\end{description}
The answer is {\it no}, which can be seen easily by the following facts:
 \begin{description}
\item[(a)]if $d_L(X,Y)<\infty$, the Hausdorff dimensions of $X$ and $Y$ must coincide;
\item[(b)]for any non-negative real number $d$, there is a compact metric space $X$
whose Hausdorff dimension is equal to $d$. 
\end{description}
See, e.g., \cite[Proposition 1.7.19]{BBI01} for (a) and \cite{SS} for (b).

The fact (b) indicates that $(\mathcal M, d_L)$ is too big to be separable.  Then we change the question (Q) to the following more reasonable one (Q'):
For a compact metric space $X$, let $\mathcal M_X$ be the set of isometry classes of compact metric spaces $Y$ such that $d_L(X,Y)<\infty$. Any elements of $\mathcal M_X$ have a common Hausdorff dimension by (a). Then the following question arises:
\begin{description}
\item[(Q')]:  Is the metric space $(\mathcal M_X, d_L)$ separable?
\end{description}

The main results of this paper give the negative answer for this question for several $X$. To be more precise, we give two examples for $X$ such that $(\mathcal M_X, d_L)$ is not separable:
\begin{description} 
	\item[(i)] Infinite unions of shrinking closed intervals with zero
    \[ \{ 0 \} \cup \bigcup_{n=1}^{\infty}\l[\frac{1}{2^n}, \frac{1}{2^{n}}+\frac{1}{2^{n+1}}\r]; \]
	\item[(ii)] Closed interval $[0,1]$.
\end{description}
We would like to stress that $(\mathcal M_X, d_L)$ becomes non-separable even when $X$ are the above elementary cases.
We note that the non-separability of the first example follows from the non-separability of the second example. The first example, however, is easier to show the non-separability than the second example.

The present paper is organized as follows: In the first section, we show that the set of isometry classes of the infinite unions of shrinking closed intervals with zero is not separable. In the second section, we show that the set of isometry classes of 
the closed interval is not separable.

\section{The first example}

Let $\Z_{>0} = \{ n \in \Z: n>0\}$ denote the set of positive integers. For $n,m \in \ZP$, let $I(n,m)$ be an interval in $\R$ defined as follows: 
$$I(n,m)=\l[\frac{1}{2^n}, \frac{1}{2^n}+\frac{1}{2^{n+m}}\r].$$ 
For each $u=(u_n)_{n \in \ZP} \in \{1,2\}^{\ZP}$, we define the following subset in $\R$: 
\begin{align} \label{eq: Xu} 
X_{u} = \{0\} \cup \bigcup_{n=1}^{\infty} I(n,{u_n}).
\end{align}
We equip $X_{u}$ with the usual Euclidean metric in $\R$: 
\[d(x,y)=|x-y|, \quad x,y \in X_{u}.\]
Then it is easy to check that $(X_{u},d)$ is a compact metric space.

Let $\mathbf 1=(1,1,1,...) \in \{1,2\}^{\ZP}$ denote the element in $\{1,2\}^{\ZP}$ such that all components are equal to one.  Let $X_{\mathbf 1}$ be the set defined in \eqref{eq: Xu} for the element $\mathbf 1$.
Let $\mathcal M_{X_{\mathbf 1}}$ denote the set of isometry classes of compact metric spaces $X$ whose Lipschitz distances from $X_{\mathbf 1}$ are finite, that is, $d_L(X, X_\mathbf 1)<\infty$. Then we have the following result:
\begin{theorem} \label{thm: Xu}
$(\mathcal M_{X_{\mathbf 1}}, d_L)$ is not separable.
\end{theorem}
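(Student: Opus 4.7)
The plan is to exhibit an uncountable subset of $\mathcal M_{X_\mathbf{1}}$ that is uniformly $d_L$-separated, which rules out separability immediately. The natural candidate is the family $\{X_u : u \in \{1,2\}^{\ZP}\}$ defined in \eqref{eq: Xu}. Membership $X_u \in \mathcal M_{X_\mathbf{1}}$ is easy: the piecewise affine map $f_u : X_\mathbf{1} \to X_u$ sending $0$ to $0$ and mapping each component $I(n,1)$ affinely onto $I(n,u_n)$ is a bi-Lipschitz homeomorphism, since inside each component the rescaling factor lies in $\{1, 1/2\}$ and the inter-component gaps are comparable in the two spaces. Hence $d_L(X_u, X_\mathbf{1}) < \infty$ for every $u$.

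The heart of the argument is to produce an absolute constant $c_0 > 0$ with $d_L(X_u, X_v) \geq c_0$ whenever $u \neq v$. Fix any bi-Lipschitz homeomorphism $\phi : X_u \to X_v$ with $|\log {\rm dil}(\phi)| + |\log {\rm dil}(\phi^{-1})| \leq \epsilon$, so that
\[ e^{-\epsilon}|x-y| \leq |\phi(x) - \phi(y)| \leq e^{\epsilon}|x-y| \qquad \text{for all } x,y \in X_u. \]
The connected components of $X_u$ are the singleton $\{0\}$ together with the intervals $I(n,u_n)$, and likewise for $X_v$; as a homeomorphism, $\phi$ permutes these. Since $\{0\}$ is the only component that is a single point, we have $\phi(0) = 0$, and $\phi$ induces a bijection $\sigma : \ZP \to \ZP$ with $\phi(I(n,u_n)) = I(\sigma(n), v_{\sigma(n)})$.

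Applying the two-sided Lipschitz estimate to the pair $(0, 1/2^n)$ and using $\phi(1/2^n) \in I(\sigma(n), v_{\sigma(n)}) \subset [1/2^{\sigma(n)},\, 3/2^{\sigma(n)+1}]$, I obtain the sandwich
\[ \tfrac{2}{3}e^{-\epsilon} \leq 2^{n-\sigma(n)} \leq e^{\epsilon}. \]
For $\epsilon < \log(4/3)$ the only integer satisfying this is $n - \sigma(n) = 0$, so $\sigma$ is the identity. Next, any continuous bijection between two closed intervals maps endpoints to endpoints, so $\phi$ sends $I(n,u_n)$ (of length $2^{-n-u_n}$) onto $I(n,v_n)$ (of length $2^{-n-v_n}$), yielding $2^{|u_n - v_n|} \leq e^{\epsilon}$. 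For $\epsilon < \log 2$ this forces $u_n = v_n$ for every $n$. Setting $c_0 := \log(4/3)$ and contrapositing, $u \neq v$ implies $d_L(X_u, X_v) \geq c_0$.

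Consequently $\{[X_u]\}_{u \in \{1,2\}^{\ZP}}$ is an uncountable, $c_0$-separated subset of $\mathcal M_{X_\mathbf{1}}$, producing uncountably many pairwise disjoint open $d_L$-balls of radius $c_0/2$ and contradicting separability. I expect the main obstacle to lie in the rigidity step: one must argue cleanly that any bi-Lipschitz $\phi$ respects the ``limit point plus indexed intervals'' combinatorial structure of the spaces, which is what the connected-component argument together with the normalization $\phi(0)=0$ accomplishes; once $\sigma$ is identified, the remaining arithmetic with powers of $2$ is routine.
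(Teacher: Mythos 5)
Your proof is correct, and while the overall strategy coincides with the paper's (the same family $\{X_u\}_{u\in\{1,2\}^{\ZP}}$, the same bi-Lipschitz maps for membership in $\mathcal M_{X_{\mathbf 1}}$, and a uniform separation lemma), your argument for the key rigidity step is genuinely different from, and cleaner than, the paper's. The paper first matches interval \emph{lengths} to get $n+u_n=P(n)+v_{P(n)}$, and only then pins down the permutation $P$ by a minimality argument: it takes the least $n_*$ with $P(n_*)\neq n_*$, deduces $P(n_*)=n_*+1$, $P(n_*+1)=n_*$, and derives a contradiction from the distance between $f(1/2^{n_*+1})$ and $f(1/2^{n_*+2})$, which costs a case analysis over where adjacent endpoints can land. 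You instead observe that $\{0\}$ is the unique singleton component, so $\phi(0)=0$, and then the two-sided Lipschitz bound applied to the pairs $(0,1/2^n)$ immediately forces $\sigma(n)=n$ once $\epsilon<\log(4/3)$; the length comparison then finishes the job. This exploitation of the accumulation point as a canonical fixed point eliminates the paper's combinatorial step (ii) entirely, at the modest price of a smaller separation constant ($\log(4/3)$ versus the paper's $\log 2$), which is irrelevant for non-separability. The only place you are slightly terse is the verification that the explicit map $f_u:X_{\mathbf 1}\to X_u$ distorts inter-component distances by at most a factor of $2$; this does require a short computation (as in the paper's displayed inequality $\tfrac12|x-y|\le|f_{u,v}(x)-f_{u,v}(y)|\le 2|x-y|$), but it is routine and does not affect the validity of the argument.
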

\proof
It is enough to find a certain discrete subset $\mathbb X \subset \mathcal M_{X_\mathbf 1}$ with the continuous cardinality. 
We introduce a subset $\mathbb X \subset \mathcal M$, which is the set of isometry classes of all $X_u$ for $u \in \Z_{>0}$:
\[ \mathbb{X}=\{ (X_{u},d) : u \in \{1,2\}^{\Z_{>0}}\}/\text{isometry}.\]
It is clear that the cardinality of $\mathbb X$ is continuum.
We show that $\mathbb{X} \subset \mathcal{M}_{X_{\mathbf 1}}$ and $\mathbb{X}$ is discrete (i.e., every point in $\mathbb X$ is isolated).

We first show that $\mathbb{X} \subset \mathcal{M}_{X_{\mathbf 1}}$.
For $u=(u_n)_{n \in \ZP} \in \{1,2\}^{\Z_{>0}}$ and $v=(v_n)_{n \in \ZP} \in \{1,2\}^{\Z_{>0}}$, let $f_{u,v}$ be a function from $X_{u}$ to $X_{v}$ defined by
\begin{align*}
f_{u,v}(x)=
\begin{cases} \displaystyle
0 \quad (x=0),\\
\displaystyle \frac{2^{u_n}}{2^{v_n}}\l(x-\frac{1}{2^n}\r)+ \frac{1}{2^n} \quad \bigl(x \in I({n},{u_n}) \bigr).
\end{cases}
\end{align*}
Then $f_{u,v}$ is a bi-Lipschitz continuous function from $X_{u}$ to $X_{v}$ and for $x,y \in X_{u}$,
\[\frac{1}{2}|x-y| \leq |f_{u,v}(x)-f_{u,v}(y)|\leq 2|x-y|.\]
Therefore the Lipschitz distance between $X_u$ and $X_v$ is bounded by
\begin{align*}
d_L(X_{u},X_{v}) \leq 2\log 2 \quad \text{for any $u,v \in \ZP$}.
\end{align*}
Thus we have that $\mathbb{X} \subset \mathcal{M}_{X_{\mathbf 1}}$.

Second we show that $\mathbb{X}$ is discrete:
\begin{lemma}\label{CE}
Let $X_{u}, X_{v} \in \mathbb{X}$.
If $d_L(X_{u},X_{v})<\log 2$, then $u=v$.
\end{lemma}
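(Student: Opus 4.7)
The plan is to unpack the hypothesis $d_L(X_u,X_v)<\log 2$ into a concrete bi-Lipschitz homeomorphism $f\colon X_u \to X_v$ whose dilations $L = \mathrm{dil}(f)$ and $L' = \mathrm{dil}(f^{-1})$ satisfy $|\log L|+|\log L'| < \log 2$. In particular this forces $L, L' \in (1/2, 2)$ \emph{strictly}. Since $f$ is a homeomorphism it sends connected components to connected components, and $\{0\}$ is the unique singleton component of each space (all other components being the nondegenerate intervals $I(n,u_n)$ or $I(n,v_n)$), so $f(0)=0$. The remaining components are therefore permuted by $f$: there exists a bijection $\sigma\colon \ZP \to \ZP$ with $f(I(n,u_n)) = I(\sigma(n), v_{\sigma(n)})$.

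The first key step is to prove $\sigma = \mathrm{id}$. Set $x_n := 1/2^n$, the left endpoint of $I(n,u_n)$. Because $f(x_n) \in I(\sigma(n), v_{\sigma(n)})$, its absolute value is at least $1/2^{\sigma(n)}$; on the other hand the Lipschitz bound applied to the pair $(x_n,0)$, together with $f(0)=0$, yields $|f(x_n)| \le L/2^n$. Hence $2^{n-\sigma(n)} \le L < 2$, and as the exponent is an integer this forces $\sigma(n)\ge n$. Running the same argument for $f^{-1}$ gives $\sigma^{-1}(n)\ge n$ for all $n$; the two inequalities together yield $\sigma = \mathrm{id}$.

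With $\sigma = \mathrm{id}$, $f$ maps the two endpoints of $I(n,u_n)$ onto the two endpoints of $I(n,v_n)$, so their images are at distance $1/2^{n+v_n}$, the length of $I(n,v_n)$. The Lipschitz inequality then gives
\[ 2^{u_n-v_n} \;=\; \frac{1/2^{n+v_n}}{1/2^{n+u_n}} \;\in\; [1/L', L] \;\subset\; (1/2, 2), \]
so $u_n - v_n$ is an integer in $(-1,1)$ and hence vanishes. Thus $u=v$. The main obstacle is Step 2: one must notice that $0$ is fixed by $f$ and that the \emph{strict} bound $L<2$ coming from $d_L<\log 2$ promotes a soft Lipschitz estimate into an integer-valued rigidity that pins down the permutation $\sigma$. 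Once that observation is in hand the remaining comparisons are routine.
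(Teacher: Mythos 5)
Your proof is correct, and for the harder half of the argument it takes a genuinely different route from the paper. Both proofs agree on the skeleton: extract $f$ with $\mathrm{dil}(f),\mathrm{dil}(f^{-1})\in(1/2,2)$, note that components are permuted via a bijection $\sigma$, and compare interval lengths to force $u_n=v_n$ once $\sigma=\mathrm{id}$ is known. The difference is in how $\sigma=\mathrm{id}$ is established. The paper first proves the length identity $n+u_n=\sigma(n)+v_{\sigma(n)}$, deduces $\sigma(n)\in\{n-1,n,n+1\}$, and then rules out a minimal displaced index $n_*$ by a local distance computation between the images of $1/2^{n_*+1}$ and $1/2^{n_*+2}$, getting a dilation at least $5/2$. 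You instead observe that $\{0\}$ is the unique singleton component, so $f(0)=0$, and then the one-sided estimate $1/2^{\sigma(n)}\le |f(1/2^n)|\le \mathrm{dil}(f)/2^n<1/2^{n-1}$ gives $\sigma(n)\ge n$; applying the same to $f^{-1}$ and composing yields $\sigma=\mathrm{id}$ in two lines. Your argument is shorter and arguably more robust: it uses only distances to the accumulation point $0$, which the paper's proof never exploits, and it avoids the combinatorial case analysis of where adjacent intervals can go. The paper's method has the minor advantage of being purely local (it would survive modifications of the example that destroy the special role of $0$), but for this lemma your route is cleaner and fully rigorous.
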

\begin{proof}
Let $u=(u_n)_{n \in \ZP} \in \{1,2\}^{\Z_{>0}}$ and $v=(v_n)_{n \in \ZP} \in \{1,2\}^{\Z_{>0}}$. 
We show that $u_n=v_n$ for all $n \in \ZP$. 
By the assumption $d_L(X_{u},X_{v})<\log 2$, there exists a bi-Lipschitz function $f:X_{u} \to X_{v}$ such that 
\begin{equation}\label{ex1-ass}
|\log \mbox{dil}(f)|+|\log \mbox{dil}(f^{-1})|<\log 2.
\end{equation}
Since $f$ is homeomorphic, any intervals must be mapped to intervals by $f$. 
That is, there exists a bijection $P : \Z_{>0} \to \Z_{>0}$ as $n \mapsto P(n)$ such that 
\[f(I(n,{u_n}))=I({P(n)},{v_{P(n)}}).\]

To show $u_n=v_n$ for all $n \in \ZP$, we have two steps: 
\begin{description}
	\item[(i)] $n+u_n = P(n) + v_{P(n)}$;
	\item[(ii)] $P(n)=n$.
\end{description}

We start to show (i) by contradiction.
Assume there exists $n_0 \in \Z_{>0}$ such that $n_0+u_{n_0} \neq P(n_0)+v_{P(n_0)}$.
Since $f|_{I({n_0},{u_{n_0}})}$ is homeomorphic, the endpoints of $I({n_0},{u_{n_0}})$ must be mapped to the endpoints of $I({P(n_0)},{v_{P(n_0)}})$ by $f|_{I({n_0},{u_{n_0}})}$.
Therefore 
\[ 
\l| f\l(\frac{1}{2^{n_0}}\r) - f\l(\frac{1}{2^{n_0}}+ \frac{1}{2^{n_0+u_{n_0}}} \r) \r| = \frac{1}{2^{P(n_0)+v_{P(n_0)}}}.
\]
Thus the dilation of $f$ is at least bigger than 
\begin{align*}
{\rm dil}(f) 
&\ge \frac{| f(1/2^{n_0}) - f(1/2^{n_0}+1/2^{n_0+u_{n_0}})|}{|1/2^{n_0}-(1/2^{n_0}+1/2^{n_0+u_{n_0}})|}
\\
& = \frac{1}{2^{P(n_0)+v_{P(n_0)}-(n_0+u_{n_0})}}.
\end{align*}
By  the assumption of  $n_0+u_{n_0} \neq P(n_0)+v_{P(n_0)}$, we have that ${\rm dil}(f) \ge 2$ or ${\rm dil}(f^{-1}) \ge 2$.
This implies 
$$|\log\mbox{dil}(f)|\geq \log 2 \quad \text{or} \quad  |\log\mbox{dil}(f^{-1})|\geq \log 2. $$
This contradicts the inequality \eqref{ex1-ass}.
Hence we have $n+u_n = P(n) + v_{P(n)}$ for all $ n \in \Z_{>0}$.

We start to show (ii) by contradiction. 
Assume there exists $n_0 \in \Z_{>0}$ such that $P(n_0)\neq n_0$.
Let us define
\[ n_*= \min \{ n \in \Z_{>0}| P(n) \neq n\}.\]
Then $P(n_*)>n_*$ by definition.
Since we know that $n+u_n=P(n)+v_{P(n)}$ by the first step (i), and that $u_n$ and $v_{P(n)}$ are in $\{1,2\}$, thus the possibility of values of $P(n)$ is that $P(n)=n-1$, $n$ or $n+1$.
This implies that $P(n_*)=n_*+1$, $P(n_*+1)=n_*$ and $P(n_*+2) =n_*+2,$ or $n_*+3$. 
Since the endpoints of intervals must be mapped to the endpoints of intervals by $f$, the possibility of values of $f( 1/2^{n_*+1})$ and $ f( 1/2^{n_*+2})$ is 
\[ f\l( \frac{1}{2^{n_*+1}}\r)= \frac{1}{2^{n_*}}, \quad \text{or} \quad \frac{1}{2^{n_*}}+\frac{1}{2^{n_*+v_{n_*}}}, \]
and
\begin{align*}
 f\l( \frac{1}{2^{n_*+2}}\r) &= \frac{1}{2^{n_*+2}},  \ \frac{1}{2^{n_*+2}} + \frac{1}{2^{n_*+2+v_{(n_*+2)}}}, \ \frac{1}{2^{n_*+3}},  
 \\
 &\text{or}  \ \frac{1}{2^{n_*+3}} + \frac{1}{2^{n_*+3+v_{(n_*+3)}}}.
\end{align*}
Thus, by noting $v_{P(n_*+2)} \in \{1,2\}$, we have the following estimate: 
\begin{align*}
& \l|f\l( \frac{1}{2^{n_*+1}}\r)-f\l( \frac{1}{2^{n_*+2}}\r)\r| 
\\
&\geq \l|\frac{1}{2^{n_*}}- \l(\frac{1}{2^{n_*+2}} + \frac{1}{2^{n_*+2+v_{P(n_*+2)}}}\r)\r|
\\
&\ge \frac{5}{2} \frac{1}{2^{n_*+2}}.
\end{align*}
This shows $|\log\mbox{dil}(f)|\geq \log (5/2)$ and contradicts the inequality \eqref{ex1-ass}.
Hence we have $P(n)=n$ for all $n \in \Z_{>0}$.

By the above two steps, we have that $u_n=v_n$ for all $n \in \ZP$, and we have completed the proof of Lemma \ref{CE}.
\end{proof}
We resume the proof of Theorem \ref{thm: Xu}.

{\it Proof of Theorem \ref{thm: Xu}.} By using Lemma \ref{CE}, we know that $(\mathbb X, d_L)$ is discrete. 
Since the cardinality of $\mathbb X$ is continuum and $\mathbb X \subset \mathcal M_{X_\mathbf 1}$, we have that $(\mathcal M_{X_\mathbf 1}, d_L)$ is not separable. We have completed the proof.
\qed

\section{The second example}
In this section, we show the non-separability of $\mathcal M_{[0,1]}$:
\begin{theorem} \label{thm: Int}
 The metric space $(\mathcal M_{[0,1]},d_L)$ is not separable.
\end{theorem}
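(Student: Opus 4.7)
I will mirror Theorem~\ref{thm: Xu}: produce an uncountable $d_L$-discrete subfamily $\{[Y_u] : u \in \{1,2\}^{\ZP}\}$ inside $\mathcal M_{[0,1]}$. The main complication relative to the first example is that every element of $\mathcal M_{[0,1]}$ must be homeomorphic to $[0,1]$, so the parameter $u$ has to be encoded inside a single topological arc rather than in a disconnected sequence of intervals.

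For each $u \in \{1,2\}^{\ZP}$ I would realize $Y_u$ as a planar arc: take the horizontal intervals $I(n, u_n) \times \{0\}$ from the first example, and connect, for each $n \ge 2$, the right endpoint of $I(n, u_n) \times \{0\}$ to the left endpoint of $I(n-1, u_{n-1}) \times \{0\}$ by the upper semicircle whose diameter is the corresponding gap on the $x$-axis. Equip $Y_u$ with the Euclidean subspace metric. Then $Y_u$ is a compact topological arc whose two endpoints are $(0,0)$ (the accumulation point) and the right endpoint of $I(1, u_1) \times \{0\}$. A direct calculation shows that the arc-length metric on $Y_u$ and the Euclidean subspace metric are bi-Lipschitz with universal constants (the worst ratio, $\pi/2$, coming from two antipodal points of a common semicircle), so $Y_u$ is bi-Lipschitz to $[0,1]$ with distortion independent of $u$, and in particular $Y_u \in \mathcal M_{[0,1]}$. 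Since $\{1,2\}^{\ZP}$ has continuum cardinality, it suffices to show that $\{[Y_u]\}_u$ is $d_L$-discrete.

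To prove discreteness I fix a sufficiently small $c_0 > 0$ (say $c_0 < \log 2$), assume $d_L(Y_u, Y_v) < c_0$, and aim for $u = v$ by adapting Lemma~\ref{CE}. Let $f : Y_u \to Y_v$ be a witnessing bi-Lipschitz homeomorphism. The accumulation endpoint $(0,0)$ of $Y_u$ is distinguished by having infinitely many non-smooth points (corners where straight segments meet semicircles) in every neighborhood, whereas the other endpoint has only finitely many nearby; since $f$ is a homeomorphism of bounded distortion, it must send the accumulation endpoint of $Y_u$ to that of $Y_v$ and hence preserves the natural linear order along the arc. Then $f$ induces a bijection $P : \ZP \to \ZP$ mapping the $n$-th feature of $Y_u$ (the straight segment $I(n, u_n) \times \{0\}$ together with its following semicircle, collectively at ambient scale $2^{-n}$) to the $P(n)$-th feature of $Y_v$. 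As in step~(i) of Lemma~\ref{CE}, the dilation bound applied to the lengths $2^{-n-u_n}$ of the straight segments forces $n + u_n = P(n) + v_{P(n)}$ and hence $P(n) \in \{n-1, n, n+1\}$; as in step~(ii), the dilation bound applied to distances from the fixed endpoint $(0,0)$ (which are comparable to $2^{-n}$ for the $n$-th feature) forces $P(n) = n$; combining these identities yields $u_n = v_n$ for every $n$.

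The main technical obstacle is making rigorous the claim that $f$ maps straight segments to straight segments and semicircles to semicircles of comparable scale: in the first example the features were literal connected components, so this was automatic, but here they sit together inside a single arc. The natural intrinsic invariant distinguishing straight from circular subarcs is the chord-to-arclength defect, which vanishes identically on straight segments and on a semicircle of radius $R$ equals $r - 2R\sin(r/2R) \asymp r^3/R^2$ for two points at arclength distance $r \ll R$. A bi-Lipschitz map of sufficiently small distortion approximately preserves this invariant, so straight subarcs are sent to subarcs whose defect is small (hence essentially to straight segments modulo controlled boundary effects) and the radius of each semicircle is pinned down up to a bounded factor; combined with order preservation, these observations produce the bijection $P$ needed to run the combinatorial argument above.
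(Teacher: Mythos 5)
Your overall strategy --- an uncountable family of planar arcs, uniformly bi-Lipschitz to $[0,1]$, which are mutually $d_L$-separated because a low-distortion bi-Lipschitz map must respect the geometric ``features'' encoding $u$ --- is exactly the strategy of the paper's proof (there the arcs are built from flat parts and triangular pulses, with $u_n\in\{0,1\}$ recording the presence of a pulse rather than the length of a segment). Your construction itself is fine: $Y_u$ is a chord-arc curve with a universal constant, so $Y_u\in\mathcal M_{[0,1]}$ uniformly, and the family has continuum cardinality. The problem is that the discreteness half is not actually carried out, and the one step for which you do give a reason is justified incorrectly: ``having infinitely many non-smooth points in every neighborhood'' is not a bi-Lipschitz invariant (a bi-Lipschitz homeomorphism creates and destroys corners freely), so as stated this does not show that $f$ fixes the accumulation endpoint. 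The correct replacement --- and the engine of every estimate in the paper --- is the metric additivity defect: three points $p,q,r$ in order along a straight segment satisfy $d(p,r)=d(p,q)+d(q,r)$, whereas the two feet and the apex of a semicircle of radius $R$ satisfy $d(p,q)+d(q,r)=\sqrt{2}\,d(p,r)$, and since order along an arc is preserved or reversed by a homeomorphism, this factor $\sqrt{2}$ cannot be absorbed by a map with $\mathrm{dil}(f)\,\mathrm{dil}(f^{-1})<\sqrt{2}$. That argument rules out orientation reversal and should replace the smoothness claim.

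The remaining gap is the central one, which you correctly flag but do not close: constructing the bijection $P$. The chord-to-arclength defect is the right invariant, but under a $(1+\eta)$-bi-Lipschitz map it is preserved only up to an \emph{additive} error of order $\eta$ times the scale, so one never concludes ``the image of a segment is a segment,'' only ``the image meets any semicircle of radius $R\asymp 2^{-n}$ in an arc of length $O(\eta^{1/3}R)$,'' and dually the image of a semicircle has its radius and endpoints pinned down only up to relative errors in $\eta$. Converting these approximate statements into the exact identities $n+u_n=P(n)+v_{P(n)}$ and $P(n)=n$ requires an explicit error budget and an explicit admissible threshold $c_0$ (certainly not $\log 2$; already the semicircle-apex estimate above only yields a contradiction below $\frac{1}{4}\log 2$), and none of that bookkeeping appears. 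It is worth noting that the paper avoids global feature-matching entirely: it never proves that segments map to segments, but only tracks the images of the distinguished points $5/2^{n+2}$, $3/2^{m+1}$, $1/2^{m}$ and derives a contradiction from a three-point dilation estimate at each, which is why it ends with the concrete threshold $\frac{1}{2}\bigl(\log(\sqrt{2}+1)-\log\sqrt{5}\bigr)$. Your plan is salvageable along either route, but as written it is an outline whose hardest lemma is asserted rather than proved.
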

{\it Proof.}
It is enough to find a certain discrete subset $\mathbb Y \subset \mathcal M_{[0,1]}$ with the continuous cardinality.

Define two subsets,  {\it flat parts} $J(n,0)$, and {\it pulse parts} $J(n,1)$ in $\R^2$:
\begin{itemize}
\item Flat part: for $n \in \ZP$,
\begin{align*}
J(n,0)=&\l[ \frac{1}{2^n},\frac{1}{2^{n-1}} \r]\times \{0\},
\end{align*}
\item Pulse part: for $n \in \ZP$,
\begin{align*}
J(n,1)=&\l[ \frac{3}{2^{n+1}},\frac{1}{2^{n-1}} \r]\times \{0\}\\
&\cup \l\{ \l(x,\frac{3}{2^{n+1}}-x \r) : \frac{5}{2^{n+2}} \leq x \leq \frac{3}{2^{n+1}} \r\}\\
&\cup \l\{\l(x,x-\frac{1}{2^n}\r): \frac{1}{2^n} \leq x \leq \frac{5}{2^{n+2}}\r\}.
\end{align*}
\end{itemize}
See the figures below:
\begin{figure}[htbp]
\begin{center}
\includegraphics{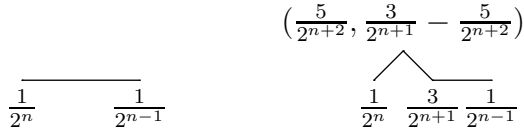}
\caption{The left is $J(n,0)$ and the right is $J(n,1)$.}
\label{picture.1}
\end{center}
\end{figure}
\newpage
\noindent For each $u=(u_n)_{n \in \ZP} \in \{0,1\}^{\ZP}$, let $Y_u$ be a subset in $\R^2$ as an infinite union of flat parts and pulse parts with the origin:
\[Y_{u} = \{(0,0)\} \cup \bigcup_{n=1}^{\infty} J(n,u_n) \subset \R^2.\]
See the figure below:
\begin{figure}[h]
\begin{center}
\includegraphics{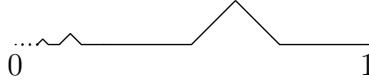}
\caption{A picture of $Y_u$.}
\label{picture.2}
\end{center}
\end{figure}

\noindent We equip $Y_u$ with the usual Euclidean distance in $\R^2$:
\begin{align} \label{equal: Euc} 
d((x_1,x_2),(y_1,y_2))=((x_1-y_1)^2+(x_2-y_2)^2)^{1/2}.
\end{align}
It is easy to check that $(Y_{u},d)$ is a compact metric space. 
Let $\mathbb Y$ be the set of isometry classes of $Y_u$ for all $u \in \{0,1\}^{\ZP}$:
\[\mathbb{Y}=\{Y_{u}:u \in \{0,1\}^{\Z_{>0}}\}/\text{isometry}.\]

Now we show that $\mathbb Y \subset \mathcal M_{[0,1]}$.
For $u \in \{0,1\}^{\Z_{>0}}$, let $f_{u}$ be the projection from $Y_u$ to $[0,1]$ such that $x=(x_1,x_2) \mapsto x_1$.
Then it is easy to see that $f_{u}$ is bi-Lipschitz continuous and, for $x,y\in Y_u$, 
\begin{align} \label{ineq: Lip2}
\frac{1}{\sqrt{2}}d(x,y) \leq |f_{u}(x)-f_{u}(y)| \leq d(x,y).
\end{align}
Therefore the Lipschitz distance between $[0,1]$ and $Y_u$ is bounded by
\[d_L([0,1],Y_{u})\leq \frac{1}{2}\log 2 \quad \text{$\forall u \in \{0,1\}^{\ZP}$}.\]
Thus we have $\mathbb Y \subset \mathcal M_{[0,1]}$.

Now we show that $\mathbb Y$ is discrete:
\begin{lemma}\label{CE2}
Let $Y_{u},Y_{v} \in \mathbb{Y}.$
If $$d_L(Y_{u},Y_{v} )<\frac{\log (\sqrt{2}+1)-\log \sqrt{5}}{2},$$ then $u=v$.
\end{lemma}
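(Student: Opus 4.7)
My plan is to mimic the argument of Lemma \ref{CE}. Let $f : Y_u \to Y_v$ be a bi-Lipschitz homeomorphism with
\[|\log {\rm dil}(f)| + |\log {\rm dil}(f^{-1})| < c_0 := \tfrac{1}{2}\bigl(\log(\sqrt 2 + 1) - \log \sqrt 5\bigr),\]
and write $L := {\rm dil}(f)\cdot {\rm dil}(f^{-1})$, so that $L < e^{c_0}$. The goal is a three-point inequality forcing $L \ge e^{2 c_0} = (\sqrt 2 + 1)/\sqrt 5$, which is the desired contradiction.

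The first step, analogous to introducing the permutation $P$ in Lemma \ref{CE}, is to argue that $f$ sends each piece $J(n, u_n)$ essentially onto $J(n, v_n)$. Since $Y_u$ and $Y_v$ are topological arcs from $(0,0)$ to $(1,0)$, $f$ must match endpoints. I would rule out the orientation-reversing case by observing that it would force $J(1, u_1)$ (of diameter $1/2$) to map into arbitrarily small pieces of $Y_v$ near $(0,0)$, incompatible with the bi-Lipschitz hypothesis for $c_0$ small. The junctions $(1/2^n, 0)$ have Euclidean distance exactly $1/2^n$ from $(0,0)$ and are separated by a factor of $2 \gg L$ in scale, so each junction in $Y_u$ must map near the corresponding junction in $Y_v$; the induced bijection on indices is therefore the identity, and the images of the junctions shift only by amounts controlled by $c_0$.

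Assume now, toward a contradiction, that $u \ne v$. Using the symmetry $d_L(Y_u, Y_v) = d_L(Y_v, Y_u)$ we may assume there exists $n$ with $u_n = 1$ and $v_n = 0$, so that $f$ sends the pulse $J(n,1)$ essentially onto the flat segment $J(n,0)$. I would then work with the three points
\[B = \bigl(5/2^{n+2},\, 1/2^{n+2}\bigr), \qquad M = \bigl(3/2^{n+1},\, 0\bigr), \qquad E = \bigl(1/2^{n-1},\, 0\bigr)\]
of $J(n,1)$ (the peak, the right foot of the tent, and the right endpoint), whose pairwise Euclidean distances $|BM| = \sqrt 2/2^{n+2}$, $|ME| = 2/2^{n+2}$, $|BE| = \sqrt{10}/2^{n+2}$ yield the key identity
\[\frac{|BM| + |ME|}{|BE|} = \frac{\sqrt 2 + 2}{\sqrt{10}} = \frac{\sqrt 2 + 1}{\sqrt 5} = e^{2 c_0}.\]

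The payoff is that $f(B), f(M), f(E)$ all lie on the flat segment $J(n, 0) \subset Y_v$ (so in particular on the $x$-axis) and are therefore collinear; since $M$ lies between $B$ and $E$ along the arc $J(n,1)$ and $f$ is a homeomorphism preserving arc-order, $f(M)$ lies on the segment from $f(B)$ to $f(E)$, giving $|f(B) - f(M)| + |f(M) - f(E)| = |f(B) - f(E)|$. Combined with $|f(B) - f(M)| \ge |BM|/{\rm dil}(f^{-1})$, $|f(M) - f(E)| \ge |ME|/{\rm dil}(f^{-1})$, and $|f(B) - f(E)| \le {\rm dil}(f)\,|BE|$, this yields $L \ge (\sqrt 2 + 1)/\sqrt 5 = e^{2 c_0}$, contradicting $L < e^{c_0}$. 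The hardest step is the first one: verifying that the scale-matching is rigid enough that all three images genuinely sit in a single flat piece (or in a union of collinear flat subsegments of the $x$-axis), since a stray image onto the slant of a neighboring tent in $Y_v$ would break the collinearity and destroy the sharp ratio $(\sqrt 2 + 1)/\sqrt 5$.
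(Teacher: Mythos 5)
Your overall strategy---exhibit three points of $Y_u$ whose images in $Y_v$ are forced to be collinear while the preimages straddle a corner, and extract the ratio $(\sqrt2+2)/\sqrt{10}=(\sqrt2+1)/\sqrt5$---is in the right spirit, and the arithmetic at the end is correct. But the step you yourself flag as hardest is a genuine gap, and as stated it fails. Your computation needs the junctions $(1/2^{n},0)$ and $(1/2^{n-1},0)$ to map \emph{exactly} to the corresponding junctions of $Y_v$, so that $f(B),f(M),f(E)$ all lie on the single flat segment $J(n,0)$. No such exact rigidity holds: a junction adjoining a locally flat piece on both sides is an interior point of a straight segment of $Y_v$ and is not metrically distinguished, so a bi-Lipschitz map of arbitrarily small distortion can slide it. In particular $f(E)=f((1/2^{n-1},0))$ may overshoot into $J(n-1,v_{n-1})$; if $v_{n-1}=1$ this puts $f(E)$ on the initial $45^{\circ}$ slant of the $(n-1)$-pulse, off the $x$-axis, and the collinearity identity $|f(B)-f(M)|+|f(M)-f(E)|=|f(B)-f(E)|$ on which your ratio rests is destroyed. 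The ``separated by a factor of $2\gg L$ in scale'' heuristic only localizes junctions to within a constant factor, which is not enough. Likewise your dismissal of the orientation-reversing case is not an argument: under reversal the image of $J(1,u_1)$ is the sub-arc of $Y_v$ from $(0,0)$ to $f((1/2,0))$, whose diameter is comparable to $1/2$, so nothing small appears.

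The paper's proof sidesteps exactly this difficulty by never asserting a global piece-to-piece correspondence. It works infinitesimally at corners: the peak $5/2^{n+2}$ of an $n$-pulse is a right-angle corner, and a local three-point estimate on a $\delta$-neighborhood (with $\delta\to 0$) shows its image must itself be a corner of $Y_v$, i.e.\ one of the points $5/2^{m+2}$, $3/2^{m+1}$, $1/2^{m}$; a second local estimate excludes the latter two families; and a scale estimate combined with one more local computation at a $135^{\circ}$ foot-corner forces $m=n$. The constant $(\sqrt2+1)/\sqrt5$ arises there from the symmetric $\delta$-configuration at that $135^{\circ}$ corner, not from your macroscopic triple $B,M,E$ (that the two ratios coincide is a pleasant accident). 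To rescue your outline you would need to replace the global matching step by such local arguments; as written, the proof is incomplete.
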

\begin{proof}
Let $u=(u_n)_{n \in \ZP} \in \{1,2\}^{\Z_{>0}}$ and $v=(v_n)_{n \in \ZP} \in \{1,2\}^{\Z_{>0}}$. 
We show that $u_n=v_n$ for all $n \in \ZP$. 
By the assumption,  there exists a bi-Lipschitz function $f$ from $Y_{u}$ to $Y_{v}$ such that
\begin{equation}\label{dil-2}
 |\log \mbox{\rm dil}(f)|+|\log \mbox{\rm dil}(f^{-1})| <\frac{\log (\sqrt{2}+1)-\log \sqrt{5}}{2}.
\end{equation}
Let us define a subset in $\ZP$ as follows: $$ P_u=\{n \in \ZP: u_n=1\}. $$
Without loss of generality, we may assume that $P_u$ is not empty. That is, $Y_u$ has at least one pulse.
The pulse part $J(n, u_n)$ of $Y_u$ for $n \in P_u$ is called {\it $n$-pulse of $Y_u$}. We note that, by the definition of the pulse parts, the peak of the $n$-pulse is attained at $5/2^{n+2}$ in $x$-axis.

It is enough for the desired result to show that $P_u=P_v$. We show that there is a bijection $F: P_u \to P_v$ such that $F(n)=n$. 
To show this, we have the following three steps:
\begin{description}
	\item[(i)] The first step: for $n \in P_u$, 
	\begin{align*} 
	f_{v}\circ f \circ f^{-1}_{u}\l ( \frac{5}{2^{n+2}}\r) \in & \l\{   \frac{5}{2^{m+2}}: m \in \Z_{>0} \r\} 
	\\
	 & \cup \l\{ \frac{3}{2^{m+1}}: m \in \Z_{>0}\r\} 
	 \\
	 & \cup \l\{ \frac{1}{2^m}:m \in \Z_{>0} \r\} .
	\end{align*}
	\item[(ii)] The second step: for $n \in P_u$, 
	\begin{align*}
	f_{v}\circ f \circ f^{-1}_{u}\l ( \frac{5}{2^{n+2}}\r) \notin & \l\{ \frac{3}{2^{m+1}}: m \in \Z_{>0}\r\} 
	\\
	& \cup \l\{ \frac{1}{2^m} : m\in \Z_{>0} \r\} .
	\end{align*}
	\item[(iii)] The third step: for $n \in P_u$, 
	\[ f_{v}\circ f \circ f^{-1}_{u}\l ( \frac{5}{2^{n+2}}\r) =\frac{5}{2^{n+2}}\quad \text{and} \quad v_n=1 .\]
\end{description}
In fact, if we show the above three statements, each maximizers $5/2^{n+2}$ of $n$-pulses of $Y_u$ are mapped to the maximizers $5/2^{n+2}$ of $n$-pulses of $Y_v$ by $f_{v}\circ f \circ f^{-1}_{u}$.
This correspondence of $n$-pulses defines the map $F: P_u \to P_v$ such that $F(n)=n$.

The proof of the all three steps (i)-(iii) are governed by the same scheme: 
\begin{description}
\item[(A)] Assume that the statements do not hold (proof by contradiction);
\item[(B)] Estimate lower bounds of the dilations of $f$ and $f^{-1}$; 
\item[(C)] The lower bounds obtained in (B) contradict the inequality \eqref{dil-2}.
\end{description}

We start to show the first step (i). Since $f$ is  homeomorphic, the maximizer  $5/2^{n+2}$ of the pulse cannot be mapped to the endpoints of $[0,1]$ by $f_{v} \circ f \circ f^{-1}_{u}$. 
Assume that, for some $n \in P_u$,
\begin{align*}
f_{v}\circ f \circ f^{-1}_{u}\l ( \frac{5}{2^{n+2}}\r) \notin & \l\{ \frac{5}{2^{m+2}} :m\in \Z_{>0} \r\}
\\
& \cup \l\{ \frac{3}{2^{m+1}} :m \in \Z_{>0}\r\} 
\\
& \cup \l\{ \frac{1}{2^m} :m \in \Z_{>0} \r\},
\end{align*}
and prove (i) by contradiction. By the continuity of $f$, there exists $0<\delta< 1/2^{n+3}$ such that, for any $x \in [5/2^{n+2}-\delta, 5/2^{n+2}+\delta]$, we have
\begin{align*}
f_{v}\circ f \circ f^{-1}_{u}\l ( x\r)  \notin & \l\{ \frac{5}{2^{m+2}} :m \in \Z_{>0} \r\}
\\
& \cup \l\{ \frac{3}{2^{m+1}}:m\in \Z_{>0}\r\} 
\\
& \cup \l\{ \frac{1}{2^m} :m \in \Z_{>0} \r\} .
\end{align*}
Therefore we have 
\[\begin{split}
&d \l(f\circ f^{-1}_{u}\l(\frac{5}{2^{n+2}}-\delta\r),f\circ f^{-1}_{u}\l(\frac{5}{2^{n+2}}+\delta\r) \r)\\
=&d \l(f\circ f^{-1}_{u}\l(\frac{5}{2^{n+2}}-\delta\r),f\circ f^{-1}_{u}\l(\frac{5}{2^{n+2}}\r) \r)\\
& +d \l(f\circ f^{-1}_{u}\l(\frac{5}{2^{n+2}}\r),f\circ f^{-1}_{u}\l(\frac{5}{2^{n+2}}+\delta\r) \r).
\end{split}\]
Here we use the fact that the three points $f\circ f^{-1}_{u}(5/2^{n+2}-\delta)$, $f\circ f^{-1}_{u}(5/2^{n+2})$ and $f\circ f^{-1}_{u}(5/2^{n+2}+\delta)$ are on the same line.
By using the inequality \eqref{ineq: Lip2}, the dilation of $f$ is estimated as follows:
\[\begin{split}
\mbox{dil}(f)
&\geq \frac{d \Bigl(f\circ f^{-1}_{u}\l(\frac{5}{2^{n+2}}-\delta\r),f\circ f^{-1}_{u}\l(\frac{5}{2^{n+2}}+\delta\r) \Bigr)}{d\Bigl( f^{-1}_{u}\l(\frac{5}{2^{n+2}}-\delta\r), f^{-1}_{u}\l(\frac{5}{2^{n+2}}+\delta\r) \Bigr)}\\
& =  \frac{d \Bigl( f\circ f^{-1}_{u}\l(\frac{5}{2^{n+2}}-\delta\r),f\circ f^{-1}_{u}\l(\frac{5}{2^{n+2}}\r) \Bigr)}{2\delta}
\\
&\quad +\frac{d \Bigl(f\circ f^{-1}_{u}\l(\frac{5}{2^{n+2}}\r),f\circ f^{-1}_{u}\l(\frac{5}{2^{n+2}}+\delta\r) \Bigr)}{2\delta}
\\
&\geq \frac{d \Bigl( f^{-1}_{u}\l(\frac{5}{2^{n+2}}-\delta\r), f^{-1}_{u}\l(\frac{5}{2^{n+2}}\r) \Bigr) }{2\delta \mbox{ dil}(f^{-1})}
\\
& \quad + \frac{d \Bigl( f^{-1}_{u}\l(\frac{5}{2^{n+2}}\r), f^{-1}_{u}\l(\frac{5}{2^{n+2}}+\delta\r) \Bigr) }{2\delta \mbox{ dil}(f^{-1})}\\
&= \frac{\sqrt{2}}{\mbox{dil}(f^{-1})}.
\end{split}\]
In the above last line, we just calculated the distance following the Euclidean distance \eqref{equal: Euc} in the $n$-pulse $J(n,1)$.
This implies that $\mbox{dil}(f)\geq 2^{\frac{1}{4}}$, or $\mbox{dil}(f^{-1}) \geq 2^{\frac{1}{4}}$. Thus we have
\[d_L(Y_{u},Y_{v}) \geq \frac{\log 2}{4}.\]
This contradicts the inequality \eqref{dil-2}.
Therefore we have, for any $n \in P_u$,
\begin{align*}
f_{v}\circ f \circ f^{-1}_{u}\l ( \frac{5}{2^{n+2}}\r) \in & \l\{ \frac{5}{2^{m+2}}: m  \in \Z_{>0} \r\}
\\
&\cup \l\{ \frac{3}{2^{m+1}} :m \in \Z_{>0}\r\} 
\\
&\cup \l\{ \frac{1}{2^m} :m  \in \Z_{>0} \r\}.
\end{align*}

We start to show the second step (ii) by contradiction. Assume that, for some $n \in P_u$, 
$$f_{v}\circ f \circ f^{-1}_{u}\l ( \frac{5}{2^{n+2}}\r) \in \l\{ \frac{1}{2^m} :m  \in \Z_{>0} \r\}.$$
Then there exists $n_1\in \Z_{>0}$ such that
\begin{align} \label{eq: step2}
f_{v}\circ f \circ f^{-1}_{u}\l ( \frac{5}{2^{n+2}}\r) = \frac{1}{2^{n_1}}.
\end{align}
By the same argument as the first step (i), we can obtain $v_{n_1}=1$, that is, the $n_1$-pulse exists in $Y_v$.
By the continuity of $f$, there exists $0< \delta < 1/2^{n_1+3}$ such that 
\begin{align*}
f_{u} \circ  f^{-1} \circ f^{-1}_{v} \l( \l[\frac{1}{2^{n_1}}-\delta,\frac{1}{2^{n_1}} \r) \r) \subset & \l(\frac{5}{2^{n+2}}-\frac{1}{2^{n+3}},\frac{5}{2^{n+2}} \r) 
\\
& \cup \l(\frac{5}{2^{n+2}},\frac{5}{2^{n+2}}+\frac{1}{2^{n+3}} \r),
\end{align*}
and 
\begin{align*}  f_{u} \circ  f^{-1} \circ f^{-1}_{v} \l( \l(\frac{1}{2^{n_1}},\frac{1}{2^{n_1}} +\delta \r] \r) \subset & \l(\frac{5}{2^{n+2}}-\frac{1}{2^{n+3}},\frac{5}{2^{n+2}} \r) 
\\
& \cup \l(\frac{5}{2^{n+2}},\frac{5}{2^{n+2}}+\frac{1}{2^{n+3}} \r). 
\end{align*}
Noting the definition of $(Y_v,d)$, for $ x \in [\frac{1}{2^{n_1}}-\delta,\frac{1}{2^{n_1}} )$ and $y \in (\frac{1}{2^{n_1}},\frac{1}{2^{n_1}} +\delta ]$, we have
\[\begin{split}
d\bigl(f^{-1}_{v}(x),f^{-1}_{v}(y)\bigr)&=\l(|x-y|^2+\l|y-\frac{1}{2^{n_1}}\r|^2\r)^{1/2},\\
d\l(f^{-1}_{v}(x),f^{-1}_{v}\l(\frac{1}{2^{n_1}}\r)\r)&=\l|x-\frac{1}{2^{n_1}}\r|,\\
d\l(f^{-1}_{v}(y),f^{-1}_{v}\l(\frac{1}{2^{n_1}}\r)\r)&=\sqrt{2}\l|y-\frac{1}{2^{n_1}}\r|.
\end{split}
\]
Since $|x-y|=|x-2^{-n_1}|+ |y-2^{-n_1}|$, we have the following inequality:
\begin{align} \label{ineq: triangle1}
&d\l(f^{-1}_{v}(x),f^{-1}_v\l(\frac{1}{2^{n_1}}\r)\r)+d\l(f^{-1}_{v}(y),f^{-1}_v\l(\frac{1}{2^{n_1}}\r)\r) 
\\
& = ( |x-2^{-n_1}|^2 + 2\sqrt{2} | x-2^{-n_1} || y-2^{-n_1} | \notag \\
& \hspace{5cm} + 2| y-2^{-n_1} |^2 )^{1/2}\notag \\
& \leq ( \sqrt{2}|x-2^{-n_1}|^2 + 2\sqrt{2} | x-2^{-n_1} || y-2^{-n_1} | \notag \\
& \hspace{5cm} + 2\sqrt{2} | y-2^{-n_1} |^2 )^{1/2}\notag \\
& = 2^{\frac{1}{4}} d(f^{-1}_{v}(x),f^{-1}_{v}(y)). \notag
\end{align} 
On the other hand, there exist $ x_0 \in [\frac{1}{2^{n_1}}-\delta,\frac{1}{2^{n_1}} )$ and $y_0 \in (\frac{1}{2^{n_1}},\frac{1}{2^{n_1}} +\delta ]$ such that
\begin{align*}
&d\l(f^{-1} \circ f^{-1}_{v}(x_0),f^{-1}_{u}\l(\frac{5}{2^{n+2}}\r) \r) 
\\
&= d\l(f^{-1} \circ f^{-1}_{v}(y_0),f^{-1}_{u}\l(\frac{5}{2^{n+2}}\r) \r).
\end{align*}
Thus  the triangle determined by the three vertices $f\circ f^{-1}_{v}(x_0)$, $f\circ f^{-1}_{v}(y_0)$ and $f^{-1}_{u}(5/2^{n+2})$ is an isosceles right triangle, and we can calculate
\begin{align} \label{eq: triangle}
&d\l(f^{-1} \circ f^{-1}_{v}(x_0),f^{-1} \circ f^{-1}_{v}(y_0) \r)\\ \notag
&=\frac{1}{\sqrt{2}}d\l(f^{-1} \circ f^{-1}_{v}(x_0),f^{-1}_{u}\l(\frac{5}{2^{n+2}}\r) \r) \\ \notag
& \quad + \frac{1}{\sqrt{2}}d\l(f^{-1} \circ f^{-1}_{v}(y_0),f^{-1}_{u}\l(\frac{5}{2^{n+2}} \r)\r).
\end{align}
By \eqref{eq: step2}, \eqref{ineq: triangle1} and \eqref{eq: triangle}, we have a bound for the dilation of $f$:
\begin{align} \label{ineq: dilation}
\begin{split}
\frac{1}{\mbox{dil}(f)} & \leq  \frac{ d\Bigl(f^{-1} \circ f^{-1}_{v}(x_0),f^{-1} \circ f^{-1}_{v}(y_0)\Bigr)}{d\Bigl(f^{-1}_{v}(x_0),f^{-1}_{v}(y_0)\Bigr)} \\
& =  \frac{d\Bigl(f^{-1} \circ f^{-1}_{v}(x_0),f^{-1}_{u}(\frac{5}{2^{n+2}}) \Bigr)}{\sqrt{2}d(f^{-1}_{v}(x_0),f^{-1}_{v}(y_0))}
\\
& \quad +\frac{d\Bigl(f^{-1} \circ f^{-1}_{v}(y_0),f^{-1}_{u}(\frac{5}{2^{n+2}}) \Bigr)}{\sqrt{2}d(f^{-1}_{v}(x_0),f^{-1}_{v}(y_0))}
\\
 \leq &\frac{\mbox{dil}(f^{-1})d\bigl( f^{-1}_{v}(x_0),f^{-1}_{v}(\frac{1}{2^{n_1}}) \bigr)}{\sqrt{2}d(f^{-1}_{v}(x_0),f^{-1}_{v}(y_0))}
 \\
 & \quad + \frac{\mbox{dil}(f^{-1}) d\bigl(f^{-1}_{v}(y_0),f^{-1}_{v}(\frac{1}{2^{n_1}}) \bigr)}{\sqrt{2}d(f^{-1}_{v}(x_0),f^{-1}_{v}(y_0))}
 \\
\le & \frac{1}{2^{\frac{1}{4}}}\mbox{dil}(f^{-1}).
\end{split}
\end{align}
Here we used the equality \eqref{eq: triangle} in the second line, the equality \eqref{eq: step2} and the definition of the dilation in the third line, and the inequality \eqref{ineq: triangle1} in the last line.
The inequality \eqref{ineq: dilation} implies that $\mbox{dil}(f)\geq 2^{\frac{1}{8}}$ or $\mbox{dil}(f^{-1}) \geq 2^{\frac{1}{8}}$. Thus we have
\[d_L(Y_{u},Y_{v}) \geq \frac{\log 2}{8}.\]
This contradicts the inequality \eqref{dil-2}.
Therefore we have, for any $n \in P_u$,
\[f_{v}\circ f \circ f^{-1}_{u}\l ( \frac{5}{2^{n+2}}\r) \notin \l\{ \frac{1}{2^{m}} :m \in \Z_{>0} \r\}.\]
By the same argument as above, we have, for any $n \in P_u$,
\[f_{v}\circ f \circ f^{-1}_{u}\l ( \frac{5}{2^{n+2}}\r) \notin  \l\{ \frac{3}{2^{m+1}} :m \in \Z_{>0}\r\}.\]

Now we start to show the third step (iii).
By the above two steps (i) and (ii), we have that, for any $n \in P_u$, there exists $p_f(n) \in \Z_{>0}$ such that 
\[f_{v}\circ f \circ f^{-1}_{u}\l ( \frac{5}{2^{n+2}}\r) =\frac{5}{2^{p_f(n)+2}} .\]
By the same argument as the first step (i), we can check that $p_f(n) \in P_v$, that is, $v_{p_f(n)}=1$.
Also for the inverse function $f^{-1}$, we have that,  
for any $n \in P_v$, there exists $p_{f^{-1}}(n) \in P_u$ such that  
\[f_{u} \circ f^{-1} \circ f^{-1}_{v}\l ( \frac{5}{2^{n+2}}\r) =\frac{5}{2^{p_{f^{-1}}(n)+2}}.\]
Since $f$ is a bijection, 
the map $p_f$ is a bijection from $P_u$ to $P_v$ and $p^{-1}_f=p_{f^{-1}}$.

Now it suffices to show that $p_f(n)=n$ for all $n \in P_u$. We assume that there exists $l\in P_u$ such that $p_f(l) \neq l$.
Without loss of generality, we may assume $p_f(l)>l$.
We first show that
\begin{align} \label{incl2-0} 
f_{u} \circ f^{-1} \circ f^{-1}_{v}\l(\frac{1}{2^{p_f(l)}}\r) \in \l(\frac{1}{2^{l}},\frac{5}{2^{l+2}}\r) \cup \l(\frac{5}{2^{l+2}},\frac{3}{2^{l+1}}\r).
\end{align}
To show this, it suffices to show that 
$$\frac{\sqrt{2}}{2^{l+2}} > d\l(f^{-1} \circ f^{-1}_{v}\l(\frac{1}{2^{p_f(l)}}\r),f^{-1}_{u}\l(\frac{5}{2^{l+2}}\r)\r),$$
where the above inequality means that the point $f^{-1} \circ f^{-1}_{v}(\frac{1}{2^{p_f(l)}})$ belongs to one of two edges in the $l$-pulse crossing at the right angle.
By $p_{f^{-1}}\circ p_f(l)=p^{-1}_{f}\circ p_f(l)=l$, we have
\[\begin{split} 
\frac{\sqrt{2}\mbox{dil}(f^{-1})}{2^{p_f(l)+2}} =& \mbox{dil}(f^{-1})d\l(f^{-1}_{v}\l(\frac{1}{2^{p_f(l)}}\r),f^{-1}_{v}\l(\frac{5}{2^{p_f(l)+2}}\r)\r)\\
\ge & d\l( f^{-1} \circ f^{-1}_{v}\l(\frac{1}{2^{p_f(l)}}\r),f^{-1}\circ f^{-1}_{v}\l(\frac{5}{2^{p_f(l)+2}}\r)\r)
\\
= & d\l( f^{-1} \circ f^{-1}_{v}\l(\frac{1}{2^{p_f(l)}}\r),f^{-1}_{u}\l(\frac{5}{2^{p_{f^{-1}}\circ p_f(l)+2}}\r)\r).
\\
= & d\l( f^{-1} \circ f^{-1}_{v}\l(\frac{1}{2^{p_f(l)}}\r),f^{-1}_{u}\l(\frac{5}{2^{l+2}}\r)\r).
\end{split}\]
Since we have $\mbox{dil}(f^{-1})\leq 2^{\frac{1}{4}}$ (by the inequality \eqref{dil-2}) and $p_f(l) \ge l+1$, it holds  
$$\frac{\sqrt{2}}{2^{l+2}}  > \frac{\sqrt{2}\mbox{dil}(f^{-1})}{2^{p_f(l)+2}} \ge d\l(f^{-1} \circ f^{-1}_{v}\l(\frac{1}{2^{p_f(l)}}\r),f^{-1}_{u}\l(\frac{5}{2^{l+2}}\r)\r).$$ 
Thus we have shown \eqref{incl2-0}.

By the continuity of $f$ and \eqref{incl2-0}, there exists $\delta>0$ such that $\delta< \frac{1}{2^{p_f(l)+3}}$ and  
\begin{align*}
 & f_{u} \circ f^{-1} \circ f^{-1}_{v}\l( \l[\frac{1}{2^{p_f(l)}}-\delta,\frac{1}{2^{p_f(l)}}+\delta\r]\r) 
 \\
 & \subset \l(\frac{1}{2^{l}},\frac{5}{2^{l+2}}\r) \cup \l(\frac{5}{2^{l+2}},\frac{3}{2^{l+1}}\r). 
 \end{align*}
Since the three points $f^{-1} \circ f^{-1}_{v}\l( \frac{1}{2^{p_f(l)}}-\delta \r), f^{-1} \circ f^{-1}_{v}\l( \frac{1}{2^{p_f(l)}} \r)$ and $ f^{-1} \circ f^{-1}_{v}\l( \frac{1}{2^{p_f(l)}}+\delta \r)$ are on the same line, we have 
\begin{align} \label{eq-2-1}
\begin{split}
& d\l(f^{-1} \circ f^{-1}_{v}\l( \frac{1}{2^{p_f(l)}}-\delta \r),f^{-1} \circ f^{-1}_{v}\l( \frac{1}{2^{p_f(l)}}+\delta \r)\r)\\
=& d\l(f^{-1} \circ f^{-1}_{v}\l( \frac{1}{2^{p_f(l)}}-\delta \r),  f^{-1} \circ f^{-1}_{v}\l( \frac{1}{2^{p_f(l)}} \r)\r)\\
&+d\l(f^{-1} \circ f^{-1}_{v}\l( \frac{1}{2^{p_f(l)}}\r),f^{-1} \circ f^{-1}_{v}\l( \frac{1}{2^{p_f(l)}}+\delta \r)\r).
\end{split}
\end{align}
Thus the inclusion \eqref{incl2-0} and the equality \eqref{eq-2-1} imply the following bound of the dilation of $f$:
\begin{align*}
\begin{split} 
& \mbox{dil}(f^{-1})
\\ \geq & \frac{d \l(f^{-1} \circ f^{-1}_{v}\l(\frac{1}{2^{p_f(l)}}-\delta\r),f^{-1} \circ f^{-1}_{v}\l(\frac{1}{2^{p_f(l)}}+\delta\r) \r)}{d\l( f^{-1}_{v}\l(\frac{1}{2^{p_f(l)}}-\delta\r), f^{-1}_{v}\l(\frac{1}{2^{p_f(l)}}+\delta\r) \r)}\\
 =& \frac{d \l(f^{-1} \circ f^{-1}_{v}\l(\frac{1}{2^{p_f(l)}}-\delta\r),f^{-1} \circ f^{-1}_{v}\l(\frac{1}{2^{p_f(l)}}\r) \r) }{\sqrt{5}\delta}\\
&+\frac{d \l(f^{-1} \circ f^{-1}_{v}\l(\frac{1}{2^{p_f(l)}}\r),f^{-1} \circ f^{-1}_{v}\l(\frac{1}{2^{p_f(l)}}+\delta\r) \r)}{\sqrt{5}\delta}\\
\geq & \frac{d \l( f^{-1}_{v}\l(\frac{1}{2^{p_f(l)}}-\delta\r), f^{-1}_{v}\l(\frac{1}{2^{p_f(l)}}\r) \r) }{\sqrt{5}\delta \mbox{ dil}(f)}
\\
& \quad + \frac{d \l( f^{-1}_{v}\l(\frac{1}{2^{p_f(l)}}\r), f^{-1}_{v}\l(\frac{1}{2^{p_f(l)}}+\delta\r) \r) }{\sqrt{5}\delta \mbox{ dil}(f)}\\
 =& \frac{\sqrt{2}+1}{\sqrt{5}\mbox{dil}(f)}.
\end{split}
\end{align*}
Here we used the following equality in the second line:  
\begin{align*}
\begin{split}
& d\l( f^{-1}_{v}\l(\frac{1}{2^{p_f(l)}}-\delta\r), f^{-1}_{v}\l(\frac{1}{2^{p_f(l)}}+\delta\r)\r)
\\
&=\l(\l|\frac{1}{2^{p_f(l)}}-\delta-\l(\frac{1}{2^{p_f(l)}}+\delta\r)\r|^2+|\delta|^2\r)^{1/2}\\
&=\sqrt{5}\delta.
\end{split}
\end{align*}
Thus $\mbox{dil}(f) \geq \l(\frac{\sqrt{2}+1}{\sqrt{5}}\r)^{1/2}$ or $\mbox{dil}(f^{-1}) \geq \l(\frac{\sqrt{2}+1}{\sqrt{5}}\r)^{1/2}$.
This contradicts the inequality \eqref{dil-2}.
Therefore we have $p_f(n)=n$ for any $n \in P_u$. 

We have completed all of the three steps. Setting $F(n)=p_f(n)$,  we have that the map $F: P_u \to P_v$ is a bijection such that $F(n)=n$ and this implies $P_u=P_v$. We have completed the proof.

\end{proof}

We resume the proof of Theorem \ref{thm: Int}.

{\it Proof of Theorem \ref{thm: Int}.} 
By using Lemma \ref{CE2}, we know that $(\mathbb Y, d_L)$ is discrete. 
Since the cardinality of $\mathbb Y$ is continuum and $\mathbb Y \subset \mathcal M_{[0,1]}$, we have that $(\mathcal M_{[0,1]}, d_L)$ is not separable. We have completed the proof.
\qed

\begin{remark} \label{rem: Ref}
Theorem \ref{thm: Int} states that $\mathcal M_{[0,1]}=\{X \in \mathcal M: d([0,1],X)<\infty\}$ is not separable. By the proof of Theorem \ref{thm: Int}, moreover we know the following stronger result, that is, the non-separability holds locally:

{\it Let $B_{d_L}([0,1], \delta)$ denote the ball in $\mathcal M_{[0,1]}$ centered at $[0,1]$ with radius $\delta>0$ with respect to the Lipschitz distance $d_L$, that is, 
$$B_{d_L}([0,1], \delta)=\{X \in \mathcal M_{[0,1]}: d_L([0,1], X)<\delta\}.$$
Then, for any $\delta>0$, $B([0,1], \delta)$ is not separable.
}

In fact, let \begin{align*}
J^{\epsilon}(n, 1)=&[3/2^{n+1}, 1/2^{n-1}] \times \{ 0 \} 
\\
&\cup \{ (x, \epsilon (3/2^{n+1}-x): 5/2^{n+2}\le x \le 3/2^{n+1} \} \\
& \cup \{ (x, \epsilon (x-1/2^n)): 1/2^n \le x \le 5/2^{n+2} \},
\\
J^\epsilon(n,0)=&J(n,0),
\\
Y^{\epsilon}_{u}=&\{(0, 0)\} \cup \bigcup_{n=1}^{\infty}J^{\epsilon}(n, u_n), \quad u=(u_n)_{n \in \Z_{>0}} \in \{0,1\}^{\Z_{>0}}.
\end{align*}
Then, by the similar proof to that of Theorem \ref{thm: Int}, we obtain
\begin{description}
	\item[(i)] For every $\epsilon>0$, the set
 
\[\mathbb Y^{\epsilon}=\{Y^{\epsilon}_u: u \in \{0, 1\}^{\Z_{>0}}\} /\mbox{isometry}\]
is discrete with cardinality of the continuum. 
	\item[(ii)] For every $\delta >0$,  there exists $\epsilon>0$ such that $\mathbb Y^\epsilon \subset B_{d_L}([0,1], \delta)$.
	\end{description}
The statement (ii) implies that $B_{d_L}([0,1], \delta)$ is not separable for any $\delta>0$.
\end{remark}
\section*{Acknowledgment}
We would like to thank an anonymous referee for careful reading of our manuscript and pointing out Remark \ref{rem: Ref}.
The first author was supported by Grant-in-Aid for JSPS Fellows Number 261798 and DAAD PAJAKO Number 57059240.

\end{document}